\newtheorem*{rep@theorem}{\rep@title}
\newcommand{\newreptheorem}[2]{%
\newenvironment{rep#1}[1]{%
 \def\rep@title{#2 \ref{##1}}%
 \begin{rep@theorem}}%
 {\end{rep@theorem}}}
\newtheorem{theorem}{Theorem}[section]
\newtheorem{lemma}[theorem]{Lemma}
\newtheorem{conjecture}[theorem]{Conjecture}
\theoremstyle{definition}\newtheorem{definition}[theorem]{Definition}
\theoremstyle{remark}\newtheorem{remark}[theorem]{Remark}
\theoremstyle{definition}
\makeatletter\@addtoreset{case}{example}\makeatother
\theoremstyle{definition}
\begin{document}

\title{The dichotomy property of ${\rm SL}_2(R)$-A short note}

\author{Alexander A. Trost}
\address{Fakult\"{a}t f\"{u}r Mathematik, Ruhr Universit\"{a}t Bochum, D-44780 Bochum, Germany}
\email{Alexander.Trost@ruhr-uni-bochum.de}

\begin{abstract}
A recent paper by Polterovich, Shalom and Shem-Tov has shown that non-discrete, conjugation invariant norms on arithmetic Chevalley groups of higher rank give rise to very restricted topologies. Namely, such topologies always have profinite norm-completions. In this note, we sketch an argument showing that this also holds for ${\rm SL}_2(R)$ for $R$ a ring of algebraic integers with infinitely many units. 
\end{abstract}

\maketitle

\section{Introduction} 
\label{intro}

The preprint \cite{polterovich2021norm} by Polterovich, Shalom and Shem-Tov studies (among other things) the topologies induced by conjugation-invariant norms on ${\rm SL}_n(R)$ for $n\geq 3$ and rings $R$ for which ${\rm SL}_n(R)$ has a certain finiteness property, namely that there is a natural number $L(R,n)$ such that each element of ${\rm SL}_n(R)$ can be written as a product of at most $L(R,n)$ elementary matrices. This property is called \textit{bounded elementary generation.} They show in this case that the following theorem holds:

\begin{theorem}\cite[Theorem~1.8]{polterovich2021norm}\label{old_theorem}
Let $R$ be an integral domain, $n\geq 3$ and assume that ${\rm SL}_n(R)$ is boundedly elementary generated and that each non-zero ideal of $R$ has finite index. Then each conjugation-invariant norm on a finite index subgroup of ${\rm SL}_n(R)$ is either discrete or has a profinite norm-completion.
\end{theorem}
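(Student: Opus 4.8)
The plan is to reduce the dichotomy to a single quantity and then to identify a non-discrete norm topology with the congruence topology. Set $\delta_0=\inf_{g\ne e}\nu(g)$. If $\delta_0>0$ the bi-invariant metric $d(g,h)=\nu(g^{-1}h)$ is discrete and there is nothing to prove, so I assume $\delta_0=0$ and must show the completion is profinite. A complete topological group is profinite exactly when it is compact with a neighbourhood basis of the identity by open subgroups, and for a bi-invariant metric this follows from one assertion: for each $\epsilon>0$ the ball $B_\epsilon=\{g:\nu(g)<\epsilon\}$ contains a finite-index subgroup $N$. Indeed the finitely many cosets of such $N$ then have $\nu$-diameter $<\epsilon$, so the group is totally bounded (its completion compact) and the closures of the $N$'s give the required basis of open finite-index subgroups. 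Since each nonzero ideal $I$ has finite index, it therefore suffices to find, for every $\epsilon$, a congruence subgroup $\Gamma(I)\subseteq B_\epsilon$. Because the congruence subgroup property holds for $\mathrm{SL}_n(R)$, $n\ge3$, under these hypotheses, the given finite-index subgroup $H$ contains some $\Gamma(I_1)$; keeping all matrix entries and all conjugating elements in $I_1$ confines every matrix below to $\Gamma(I_1)\le H$, so the entire argument runs inside $H$.

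The engine is the Steinberg relation $[e_{ij}(a),e_{jk}(b)]=e_{ik}(ab)$ for pairwise distinct $i,j,k$ — the sole place where $n\ge3$ enters — together with $\nu([x,y])\le 2\min(\nu(x),\nu(y))$, immediate from conjugation invariance and symmetry. From these I would establish propagation: a single nontrivial small elementary matrix spreads to a whole ideal of small elementary matrices. Precisely, if $t_0\ne0$ and $s\in I_1$ then $\nu(e_{13}(t_0 s))=\nu([e_{12}(t_0),e_{23}(s)])\le 2\nu(e_{12}(t_0))$, and a bounded chain of such commutators carries the small entry into every off-diagonal position, giving $e_{pq}(t_0I_1)\subseteq B_{C\nu(e_{12}(t_0))}$ for all $p\ne q$, with $C=C(n)$ absolute and $t_0I_1$ a nonzero ideal.

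The crux I expect is extraction: manufacturing a small \emph{elementary} matrix out of an arbitrary small element. I would prove there is a constant $M=M(n)$ such that every non-central $g\in H$ yields a nontrivial $e_{pq}(t)$, $t\ne0$, as a product of at most $M$ $H$-conjugates of $g^{\pm1}$, so that $\nu(e_{pq}(t))\le M\nu(g)$. If $g$ is diagonal and non-scalar this needs a single commutator: for suitable $i\ne j$ one has $g_{ii}\ne g_{jj}$, hence $g_{ii}g_{jj}^{-1}-1\ne0$ in the domain $R$ (diagonal entries of elements of $\mathrm{SL}_n(R)$ are units), and $[g,e_{ij}(s)]=e_{ij}((g_{ii}g_{jj}^{-1}-1)s)$ is a nontrivial elementary matrix for any $s\in I_1\setminus\{0\}$. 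The remaining case, where $g$ has a nonzero off-diagonal entry, calls for a uniformly bounded sequence of commutators with elementary matrices that clears every entry but one; obtaining the bound $M$ \emph{independent of} $g$ is the main technical obstacle and is exactly where the effective commutator calculus of higher-rank $\mathrm{SL}_n$ must be invoked.

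The argument then closes. Fix $\epsilon>0$; as $\delta_0=0$ I may pick $g\ne e$ with $\nu(g)$ as small as desired, and extraction followed by propagation produces a nonzero ideal $J\subseteq I_1$ with $e_{pq}(J)\subseteq B_\eta$ for all $p\ne q$, where $\eta\le MC\nu(g)$ can be made arbitrarily small. By conjugation invariance every generator of the relative elementary subgroup $E_n(R,J)$, and every $H$-conjugate of one, also has norm $<\eta$. Feeding this into bounded elementary generation in its relative form — a bound $L$ on the number of elementary factors, uniform in $J$ — shows every element of $E_n(R,J)$ is a product of at most $L$ such generators, so $E_n(R,J)\subseteq B_{L\eta}$. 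As $E_n(R,J)$ contains a congruence subgroup $\Gamma(J')$ by the standard normal-subgroup description for $\mathrm{SL}_n(R)$, $n\ge3$, choosing $g$ with $LMC\nu(g)<\epsilon$ yields $\Gamma(J')\subseteq B_\epsilon$, which is what the first paragraph required. The uniformity of $L$ in $J$ is the second delicate point beside extraction; were only a non-uniform $L(J)$ available, one would instead have to make $\nu(g)\to0$ outrun the growth of $L(J)$, and it is precisely bounded generation together with the finiteness of the quotients $R/I$ that make this controllable.
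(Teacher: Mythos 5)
You have the right skeleton---manufacture small elementary matrices, spread them over an ideal, and combine this with bounded elementary generation and finiteness of the quotients $R/I$---and your opening reduction (the completion is profinite once every $\epsilon$-ball contains a finite-index subgroup) is sound. But the two steps you defer are not deferred details; they are the theorem. (The paper only cites this statement from \cite{polterovich2021norm}; what it proves is the ${\rm SL}_2$-analogue, Theorem~\ref{main_thm_tech_version}, by the same template, so that is what I compare against.) Your ``extraction'' step---a uniform constant $M$ such that every non-central $g$ yields a nontrivial $e_{pq}(t)$ as a product of at most $M$ conjugates of $g^{\pm1}$---is precisely the engine of the whole argument, namely \cite[Lemma~A9]{polterovich2021norm} (here, its ${\rm SL}_2$-version Lemma~\ref{tech_lemma_2}), and you leave it unproved, flagging it yourself as ``the main technical obstacle.'' Everything else in your outline, and in the paper's proof, is soft by comparison. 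Note also that the actual lemma is organized more efficiently than your extraction-then-propagation plan: starting from an \emph{arbitrary} matrix $A$ with nonzero off-diagonal entry $c$, it directly produces all of $E_{12}(J)$ for a nonzero ideal $J$ determined by $c$, each element being a product of at most four conjugates of $A^{\pm1}$; no intermediate step producing a single elementary matrix is needed.

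The second gap is in your assembly, which invokes inputs strictly stronger than the hypotheses. You appeal to the congruence subgroup property to get $\Gamma(I_1)\subseteq H$, and, critically, you claim $\Gamma(J')\subseteq E_n(R,J)$ ``by the standard normal-subgroup description.'' The sandwich theorem says that a subgroup normalized by the elementary group lies \emph{between} $E_n(R,I)$ and the corresponding congruence-type subgroup; it does \emph{not} place a congruence subgroup inside $E_n(R,J)$. That claim is equivalent to finiteness of ${\rm SL}_n(R,J)/E_n(R,J)$ (relative ${\rm SK}_1$, essentially congruence-kernel finiteness), a Bass--Milnor--Serre-type theorem in the arithmetic setting and not a consequence of ``integral domain $+$ bounded elementary generation $+$ finite quotients.'' The same applies to the uniform-in-$J$ relative bounded generation you need (your second ``delicate point''). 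The paper's proof is engineered to avoid all three: the only finite-index fact it uses is the trivial one, $[E_{12}(I):E_{12}(J)]=[I:J]<\infty$, which makes the closure of each root subgroup in the completion profinite (Lemma~\ref{tech_lemma_1}); then \emph{absolute} bounded elementary generation writes the completion as a finite set-theoretic product of these profinite subgroups, and such a product is itself profinite (the observation quoted from the proof of \cite[Theorem~1.8(i)]{polterovich2021norm}). The finite-index case is likewise reduced to norms on the relative elementary subgroups $E_n(R,I)$ and Lemma~\ref{tech_lemma_1}, not to congruence subgroups, so no CSP-type input ever enters. If you replace your compactness-via-congruence-subgroups ending with this product argument, and actually prove the extraction lemma, your outline becomes the paper's proof.
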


A group satisfying the conclusion of the theorem is said to satisfy the \textit{dichotomy property}. In any case, there are quite many rings of interest that satisfy the assumption of the theorem, for example rings of integers in global and local fields. The assumption of $n\geq 3$ in Theorem~\ref{old_theorem} is also quite common in the study of conjugation-invariant norms on Chevalley groups. This is mostly due to the fact that the normal subgroup structure of ${\rm SL}_2(R)$ is more complicated than the one of ${\rm SL}_n(R)$ for $n\geq 3.$ Namely, the normal subgroups of the latter are essentially parametrized by ideals of the ring $R$ (or more precisely so-called \textit{admissible pairs of ideals} as shown by Abe \cite{MR991973}), whereas normal subgroups of ${\rm SL}_2(R)$ are given by more complicated subgroups of $(R,+)$ called \textit{radices} as proven by Costa and Keller \cite{MR1114610}. However, as seen in our preprint \cite{SL_2_strong_bound}, it is often possible to generalize results of the study of conjugation invariant norms from $n\geq 3$ to $n=2$ by introducing additional assumptions on the existence of a lot of units in the ring. In this context, \cite{polterovich2021norm} raises the following question:

\begin{conjecture}\label{conj}
Let $p$ be a prime in $\mathbb{Z}.$ Does ${\rm SL}_2(\mathbb{Z}[1/p])$ satisfy the dichotomy property?
\end{conjecture}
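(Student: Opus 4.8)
The plan is to verify that $R=\mathbb{Z}[1/p]$ satisfies the ring-theoretic hypotheses needed to run the $\mathrm{SL}_2$-analogue of Theorem~\ref{old_theorem}, and then to carry out the dichotomy argument directly. Three facts are relevant. First, $R$ has infinitely many units, namely $R^\times=\{\pm p^k : k\in\mathbb{Z}\}$, which is exactly the abundance-of-units hypothesis under which $\mathrm{SL}_2$-results can be pushed down from $n\ge 3$, as in \cite{SL_2_strong_bound}. Second, every non-zero ideal of $R$ has finite index: such an ideal is $(n)$ with $\gcd(n,p)=1$, and $R/(n)\cong\mathbb{Z}/n\mathbb{Z}$. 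Third, $\mathrm{SL}_2(R)$ is boundedly elementary generated, a classical fact for rings of $S$-integers with infinitely many units (here $S=\{p,\infty\}$) due to Vaserstein and Liehl. Granting these, let $\nu$ be a non-discrete conjugation-invariant norm on a finite-index subgroup $\Gamma\le\mathrm{SL}_2(R)$; the goal is to show that the $\nu$-completion of $\Gamma$ is profinite.

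After passing to a congruence subgroup contained in $\Gamma$, the main step is to show that the congruence subgroups of $\mathrm{SL}_2(R)$ form a neighbourhood basis of the identity in the $\nu$-topology. Since $\nu$ is non-discrete there are non-trivial elements of arbitrarily small $\nu$-norm; I would first use standard commutator manipulations, conjugation-invariance, and in particular conjugation by the diagonal torus $h(u)=\mathrm{diag}(u,u^{-1})$, which acts by $h(u)\,e_{12}(t)\,h(u)^{-1}=e_{12}(u^2 t)$, to produce for every $\varepsilon>0$ a non-zero element $t_0$ with $\nu(e_{12}(t_0)),\nu(e_{21}(t_0))<\varepsilon$. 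Feeding in the units $u=p^k$ then yields small-norm elementary matrices $e_{12}(p^{2k}t_0)$, and taking products and further commutators should sweep out $e_{12}(t)$ and $e_{21}(t)$ for all $t$ in a finite-index ideal $I=I(\varepsilon)$, all of small norm. Bounded elementary generation then bounds the $\nu$-diameter of the congruence subgroup $\mathrm{SL}_2(R,I)$ by a fixed multiple of $\varepsilon$, so the $\mathrm{SL}_2(R,I)$ are $\nu$-small; because each $I$ has finite index these are open finite-index subgroups shrinking to the identity, giving a totally disconnected completion, while bounded generation of the whole group bounds its diameter and yields compactness. Together these make the completion profinite.

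The main obstacle is precisely the step that separates $n=2$ from $n\ge 3$: the sweeping-out in the previous paragraph. For $n\ge 3$ one simply uses $[e_{13}(a),e_{32}(b)]=e_{12}(ab)$ to multiply an arbitrary ring element into a single root subgroup, but in $\mathrm{SL}_2$ there is no third index and this identity is unavailable. Moreover, by Costa--Keller \cite{MR1114610} the normal subgroups of $\mathrm{SL}_2(R)$ are governed by radices rather than by ideals, so a priori the normal subgroup generated by a single small $e_{12}(t_0)$ need not contain any ideal. The crux is therefore to show that, in the presence of the infinitely many units of $R$, the additive set generated by $\{u^2 t_0 : u\in R^\times\}$ together with the radix-closure operations contains a finite-index ideal, with all the corresponding elementary matrices of controlled norm. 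I expect this to be the hardest part and the place where the strong-boundedness technology for $\mathrm{SL}_2$ over rings with many units from \cite{SL_2_strong_bound} must be invoked; once radices are forced to contain finite-index ideals, the remainder of the argument runs in parallel with the proof of Theorem~\ref{old_theorem}.
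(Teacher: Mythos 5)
Your setup is the right one and matches the paper's strategy: verify that $R=\mathbb{Z}[1/p]$ has infinitely many units, that every non-zero ideal has finite index, and that $\mathrm{SL}_2(R)$ is boundedly elementary generated, and then reduce the dichotomy to showing that for every $\varepsilon>0$ there is a finite-index ideal $J$ with $E_{12}(J)$ (and $E_{21}(J)$) contained in the $\varepsilon$-ball around the identity; bounded generation then assembles the completion as a finite product of profinite subgroups, exactly as in the paper's proof of Theorem~\ref{main_thm_tech_version}. However, at the step you yourself call the crux, your argument stops being a proof, and the mechanism you suggest would not close it. Conjugation by $h(u)$ produces only the elements $e_{12}(u^{2k}t_0)$, each of norm $<\varepsilon$ (conjugation invariance preserves the norm), and an arbitrary element of the ideal generated by $t_0$ is a $\mathbb{Z}$-linear combination of these requiring unboundedly many summands (or unboundedly large digits in a base-$p^2$ expansion); the triangle inequality then gives $\nu(e_{12}(zt_0))\leq N(z)\cdot\varepsilon$ with $N(z)$ unbounded, so you obtain an $\varepsilon$-small generating set of $E_{12}(t_0R)$, not an $\varepsilon$-small subgroup. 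The Costa--Keller radix theory you invoke does not help either: the obstruction here is metric (uniform norm control), not algebraic (identifying which normal subgroup is generated), and no amount of radix-closure bookkeeping produces a bound on the number of conjugates needed.

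The paper closes exactly this gap with a single explicit identity, Lemma~\ref{tech_lemma_2} (a modified version of \cite[Lemma~A9]{polterovich2021norm}), and this is the one genuinely new ingredient you are missing. Given $A\in E(2,R,I)$ with $\|A\|\leq\varepsilon/4$ and non-zero $(2,1)$-entry $c$, one chooses a unit $u$ with $u\equiv 1 \bmod c^2R$ and $u^8\neq 1$; the congruence modulo $c^2$ rather than $c$ is essential, since it forces $t:=ax$ (where $u^4-1=cx$) to lie in $cR$, so that the matrix $Y:=E_{12}(t)A^{-1}E_{12}(-t)h(u^2)Ah(u^{-2})$ is a product of two $E(2,R,cR)$-conjugates of $A^{\pm 1}$ and is upper triangular with diagonal $(u^{-4},u^4)$. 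Conjugating upper-triangular matrices by $Y$ then exhibits \emph{every} element of $E_{12}((u^8-1)cR)$ as a product of at most \emph{four} such conjugates, hence of norm at most $4\cdot\varepsilon/4=\varepsilon$, uniformly. This yields the finite-index ideal $J_\varepsilon=(u^8-1)cR$ with $E_{12}(J_\varepsilon)$ in the $\varepsilon$-ball, which is precisely what your sketch needs and does not supply; note also that the relevant unit condition is the paper's refined one ($u-1\in c^2R$, $u^8\neq 1$), not merely the existence of infinitely many units, although for rings of $S$-integers the latter implies the former. So: correct framework and correct identification of the obstacle, but the obstacle itself is left unresolved, and the tools you point to would not resolve it without this (or an equivalent) bounded-number-of-conjugates identity.
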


We will answer the question in Conjecture~\ref{conj} affirmatively with the following more general result:  


\begin{theorem}\label{main_theorem}
Let $R$ be the ring of S-algebraic integers in a number field such that $R$ has infinitely many units. Then every finite index subgroup of ${\rm SL}_2(R)$ has the dichotomy property.  
\end{theorem}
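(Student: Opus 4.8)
The plan is to follow the strategy behind Theorem~\ref{old_theorem}, replacing the three-index commutator calculus that is available only for $n\geq 3$ by an argument using the diagonal torus together with the abundance of units in $R$. Throughout I write $e_{12}(t),e_{21}(t)$ for the elementary matrices and $h(u)=\mathrm{diag}(u,u^{-1})$ for the diagonal matrix attached to a unit $u\in R^*$; the single structural identity I shall lean on is $h(u)e_{12}(t)h(u)^{-1}=e_{12}(u^2t)$, together with the commutator inequality $\nu([g,x])\leq 2\nu(g)$, valid for any conjugation-invariant norm $\nu$ and any $x$.

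First I would reduce the statement to a convenient subgroup. Since $R=\mathcal{O}_S$ has infinitely many units, Dirichlet's theorem gives $|S|\geq 2$, so ${\rm SL}_2(R)$ enjoys the congruence subgroup property (Serre) and is boundedly elementarily generated (Morgan--Rapinchuk--Sury). In particular every finite index subgroup $G\leq {\rm SL}_2(R)$ contains a principal congruence subgroup; after checking that the dichotomy property is invariant under passing between commensurable groups (the norm simply restricts, and bounded generation controls the restriction in both directions), it suffices to treat a congruence subgroup. The goal then becomes: for a \emph{non-discrete} conjugation-invariant norm $\nu$, show that every ball $B_{\leq\epsilon}=\{g:\nu(g)\leq\epsilon\}$ contains a finite index subgroup, which is exactly the horn of the dichotomy opposite to discreteness.

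The heart of the argument is to manufacture, for each $\epsilon>0$, an entire congruence subgroup inside a ball of controlled radius. Non-discreteness supplies nontrivial $g$ with $\nu(g)$ arbitrarily small; applying the commutator inequality and conjugating by torus and unipotent elements, I would extract elementary matrices $e_{12}(t)$ of small norm and record the set $A_\epsilon=\{t\in R:\nu(e_{12}(t))\leq\epsilon\}$. By construction $A_\epsilon$ is an additive subgroup of $(R,+)$, and the torus relation shows it is stable under multiplication by $u^2$ for every unit $u$, since $\nu(e_{12}(u^2t))=\nu(h(u)e_{12}(t)h(u)^{-1})=\nu(e_{12}(t))$. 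Here the units hypothesis performs the role played by the extra root subgroups when $n\geq 3$: as established in \cite{SL_2_strong_bound}, a nonzero additive subgroup of $R$ invariant under multiplication by all squares of units necessarily contains a nonzero ideal $I_\epsilon$; the mechanism is that the $\mathbb{Z}$-span $M$ of $\{u^2:u\in R^*\}$ is a subring of $R$ of which $R$ is a finite module, forcing $A_\epsilon\supseteq Mt$ and hence $A_\epsilon\supseteq I_\epsilon$ for any $0\neq t\in A_\epsilon$. This unit-theoretic input, and the preliminary task of extracting genuine elementary matrices from arbitrary small elements, is the step I expect to be the main obstacle.

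Once $e_{12}(t)$ and, symmetrically, $e_{21}(t)$ have norm at most $\epsilon$ for all $t\in I_\epsilon$, the relative version of bounded elementary generation lets me write every element of the elementary congruence subgroup $E(R,I_\epsilon)$ as a product of at most $L$ such small elementary matrices, so that $E(R,I_\epsilon)\subseteq B_{\leq L\epsilon}$. By the congruence subgroup property $E(R,I_\epsilon)$ has finite index in $G$, because the nonzero ideal $I_\epsilon$ satisfies that $R/I_\epsilon$ is finite. Letting $\epsilon\to 0$, the subgroups $E(R,I_\epsilon)$ form a neighborhood basis of the identity consisting of open, finite index normal subgroups, whence the norm-completion $\hat{G}$ is the inverse limit of the finite quotients $G/E(R,I_\epsilon)$; in particular $\hat{G}$ is compact, Hausdorff and totally disconnected, that is, profinite. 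Since this establishes the profinite alternative whenever $\nu$ is non-discrete, the dichotomy property, and hence the theorem, follows.
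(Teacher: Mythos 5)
Your high-level frame (reduce via bounded elementary generation and congruence subgroups, then stuff a whole relative elementary subgroup into every norm ball) is the right one, and you correctly flag the crucial step; but the mechanism you propose for that step does not work, for two separate reasons. First, the set $A_\epsilon=\{t\in R:\nu(e_{12}(t))\leq\epsilon\}$ is \emph{not} an additive subgroup: the triangle inequality only gives $A_\epsilon+A_\epsilon\subseteq A_{2\epsilon}$, and more generally $\nu(e_{12}(nt))\leq |n|\,\nu(e_{12}(t))$, so passing from $t$ to the $\mathbb{Z}$-span $Mt$ inflates the norm without bound as the integer coefficients grow. Your argument therefore never produces an ideal inside a ball of \emph{fixed} radius, which is exactly what the dichotomy needs; it only produces ideals inside the union of all balls, which is vacuous. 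Second, even the structural claim fails for rings covered by the theorem: if $K$ is a CM field whose only roots of unity are $\pm 1$ (e.g.\ $K=\mathbb{Q}(\sqrt{-7},\sqrt{-11})$, where the unit group has rank $1$), then for every unit $u$ one has $u/\bar u=\pm 1$, so $u^2=\pm u\bar u$ lies in the maximal totally real subfield $K^{+}$; hence the $\mathbb{Z}$-span $M$ of the unit squares sits inside $\mathcal{O}_{K^{+}}$, whose $\mathbb{Z}$-rank is half that of $R=\mathcal{O}_K$, and $Mt$ cannot contain any nonzero ideal of $R$.

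What is missing is precisely the uniformity device the paper uses in Lemma~\ref{tech_lemma_2} (adapted from \cite[Lemma~A9]{polterovich2021norm}): given a small element $A$ with nonzero lower-left entry $c$ and a unit $u\equiv 1 \bmod c^2R$ with $u^8\neq 1$, one forms $Y=E_{12}(t)A^{-1}E_{12}(-t)h(u^2)Ah(u^{-2})$, a product of two $E(2,R,cR)$-conjugates of $A^{\pm 1}$, which is upper triangular with diagonal $(u^{-4},u^4)$; commutating $Y$ against \emph{arbitrary} upper triangular matrices then realizes every element of $E_{12}((u^8-1)cR)$ as a product of four conjugates of $A^{\pm 1}$, hence of norm at most $4\nu(A)$, uniformly over the whole ideal. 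The point is that the arbitrarily large matrix enters only as a conjugator, which conjugation-invariance makes free --- this is exactly the inequality $\nu([g,x])\leq 2\nu(g)$ that you list among your tools at the outset but never deploy at the decisive moment. Note also that the congruence $u\equiv 1\bmod c^2R$ (not just $u$ of infinite order) is what keeps all conjugators inside $E(2,R,cR)$, which matters because the finite-index case is handled in the paper not by a commensurability-invariance claim but by restricting a norm on $E(2,R,I)$ to $E_{12}(I)$ and proving its completion profinite (Lemma~\ref{tech_lemma_1}), then assembling the completion of $E(2,R)$ as a bounded product of the resulting profinite subgroups. With Lemma~\ref{tech_lemma_2} in place of your span argument, the rest of your outline goes through along the lines of Theorem~\ref{main_thm_tech_version}.
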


The proof is almost identical to the proof of \cite[Theorem~1.8]{polterovich2021norm} itself and only requires a small modification in a technical lemma. 


\section*{Acknowledgments}

This note was written during a research visit at the Mathematische Forschungsinstitut Oberwolfach and I am very grateful for the support I received there. I also want to thank Zvi Shem-Tov for his helpful remarks.

\section{Basic definitions and notions}
\label{sec_basic_notions}

\begin{definition}
Let $G$ be a group with neutral element $1_G$. A \textit{conjugation-invariant norm} $\|\cdot\|:G\to\mathbb{R}_{\geq 0}$ is a function satisfying the properties
\begin{align*}
&\forall a\in G:\|a\|=0\iff a=1_G,\\
&\forall a\in G:\|a\|=\|a^{-1}\|,\\
&\forall a,b\in G:\|ab\|\leq\|a\|+\|b\|\text{ and}\\
&\forall a,b\in G:\|aba^{-1}\|=\|a\|
\end{align*}
\end{definition}

Further, we recall the following two concepts from \cite{polterovich2021norm}:

\begin{definition} 
A group $G$ is said to satisfy the \textit{dichotomy property}, if each non-discrete norm $\|\cdot\|$ on $G$ has a profinite norm-completion. If $G$ is itself a topological group, then $G$ is called \textit{norm-complete}, if each non-discrete norm $\|\cdot\|$ on $G$ induces the topology of $G$ as a topological group. 
\end{definition}

Let us next recall the definition of ${\rm SL}_2:$

\begin{definition}
Let $R$ be a commutative ring with $1$. Then ${\rm SL}_2(R):=\{A\in R^{2\times 2}\mid a_{11}a_{22}-a_{12}a_{21}=1\}.$
\end{definition}

Obviously for any commutative ring $R$ and any $x\in R,$ the matrices 
\begin{equation*}
E_{12}(x)=
\begin{pmatrix}
1 & x\\
0 & 1
\end{pmatrix}
\text{ and }
E_{21}(x)=
\begin{pmatrix}
1 & 0\\
x & 1
\end{pmatrix}
\end{equation*}
are elements of ${\rm SL}_2(R).$ They are called \textit{elementary matrices} and we denote the set of elementary matrices $\{E_{12}(x),E_{21}(x)\mid x\in R\}$ by ${\rm EL}.$ The subgroup of ${\rm SL}_2(R)$ generated by ${\rm EL}$ is denoted by $E(2,R).$ Furthermore, for an ideal $I\unlhd R,$ we denote by $E(2,R,I)$ the normal subgroup of $E(2,R)$ generated by the $E(2,R)$-conjugates of elements of $\{E_{12}(x)\mid x\in I\}.$ Additionally, for an ideal $I\subset R$, the subgroups $E_{12}(I)$ and $E_{21}(I)$ of $E(2,R)$ are defined as $E_{12}(I):=\{E_{12}(x)\mid x\in I\}$ and $E_{21}(I):=\{E_{21}(x)\mid x\in I\}.$ Further, for a unit $u\in R^*$ the element 
\begin{equation*}
h(u)=
\begin{pmatrix}
u & 0\\
0 & u^{-1}
\end{pmatrix}
\end{equation*}
is also an element of ${\rm SL}_2(R).$ 



\section{Proof of the main result}
\label{proof_main}

We first introduce the needed concept of $R$ containing a large number of units:

\begin{definition}\label{many_units}
Let $R$ be a commutative ring with $1$ such that for each $c\in R-\{0\},$ there is a unit $u\in R$ such that $u-1\in c^2R$ and $u^8-1\neq 0.$ Then we call $R$ a \textit{ring with many units.}
\end{definition}

\begin{remark}
Note, that this is a similar but still slightly different property to the concept of rings with many units introduced in \cite{SL_2_strong_bound}.
\end{remark}

This enables us to formulate:

\begin{theorem}\label{main_thm_tech_version}
Let $R$ be an integral domain with many units such that $E(2,R)$ is boundedly generated by elementary matrices and such that each non-zero ideal of $R$ has finite index in $R$. Then each finite index subgroup $H$ of $E(2,R)$ has the dichotomy property.
\end{theorem}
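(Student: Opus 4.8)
The plan is to run the proof of Theorem~\ref{old_theorem} essentially verbatim and to isolate the single rank-one obstruction. Recall that the metric $d(g,h)=\|gh^{-1}\|$ attached to a conjugation-invariant norm is bi-invariant, so the metric completion $\widehat{H}$ is a topological group, and $\widehat{H}$ is profinite exactly when the identity admits a neighbourhood basis of finite-index subgroups, i.e. when every ball $B_\varepsilon=\{g\in H:\|g\|<\varepsilon\}$ contains a finite-index subgroup. Thus the theorem reduces to: if $\|\cdot\|$ is non-discrete, then every $B_\varepsilon$ contains a finite-index subgroup of $H$. Non-discreteness gives a sequence $g_n\neq 1$ with $\|g_n\|\to 0$; since the center of ${\rm SL}_2(R)$ over a domain is the finite group $\{\pm I\}$, all but finitely many $g_n$ are non-central, so we may always choose a non-central $g$ of arbitrarily small norm. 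It then suffices to establish the following uniform bounded-conjugate statement: there is a constant $N$ depending only on $R$ such that for each non-central $g\neq 1$ there is a nonzero ideal $I=I(g)\unlhd R$ with every element of $E(2,R,I)$ a product of at most $N$ conjugates of $g^{\pm 1}$. Choosing $g$ with $\|g\|<\varepsilon/N$ then forces $E(2,R,I)\subseteq B_\varepsilon$, and $E(2,R,I)$ is of finite index because $R/I$ is finite and $E(2,R)$ is boundedly elementarily generated.

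This bounded-conjugate assertion is precisely the technical lemma that must be reproved for $n=2$, and it is the main obstacle. For $n\geq 3$ one produces a nontrivial transvection from $g$ via the Steinberg relation $[E_{ij}(x),E_{jk}(y)]=E_{ik}(xy)$ for three distinct indices, and spreads it out by the same relations; in ${\rm SL}_2$ no third index exists, and the torus must take over this role. First I would, given a non-central $g\neq 1$, extract a nontrivial elementary matrix $E_{12}(a)$ with $a\neq 0$ as a bounded product of conjugates of $g^{\pm 1}$, using the commutators $[g,E_{12}(t)]$ and $[g,E_{21}(t)]$ together with the Weyl element $w=E_{12}(1)E_{21}(-1)E_{12}(1)\in E(2,R)$, which interchanges the two root subgroups. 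Because $g$ is non-central and $R$ is a domain, a nonzero entry survives these commutators, guaranteeing $a\neq 0$.

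Next I would convert $E_{12}(a)$ into the building blocks of a congruence subgroup by means of the torus. The identity $h(u)E_{12}(x)h(u)^{-1}=E_{12}(u^2x)$ gives $E_{12}((u^2-1)x)=E_{12}(x)^{-1}\,h(u)E_{12}(x)h(u)^{-1}$, expressing $E_{12}((u^2-1)a)$ as a product of two conjugates of $E_{12}(a)^{\pm 1}$, and this is exactly where Definition~\ref{many_units} enters. For a prescribed $c$ the many-units hypothesis supplies a unit $u$ with $u-1\in c^2R$, hence $u^2-1=(u-1)(u+1)\in c^2R$, while the condition $u^8\neq 1$ forces $u^2\notin\{1,-1\}$ and thereby keeps the scaling factor $u^2-1$ from collapsing to zero in the domain $R$. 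Multiplying such elements and conjugating by $w$ to reach the lower root subgroup, I expect to realize $E_{12}$ and $E_{21}$ of a nonzero additive set inside $(a)$; its normal closure, together with the hypothesis that $E(2,R)$ is boundedly generated by elementary matrices (which makes $E(2,R,I)$ boundedly generated by $E(2,R)$-conjugates of $E_{12}(y)$, $y\in I$, with a bound independent of $I$), then yields all of $E(2,R,I)$ for a nonzero ideal $I\subseteq(a)$ with a uniform bound $N$, completing the lemma.

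Finally I would carry out the reduction from $E(2,R)$ to an arbitrary finite-index subgroup $H$. Here one trades $E(2,R)$-conjugation for $H$-conjugation using a fixed transversal: since $[E(2,R):H]<\infty$, each $E(2,R)$-conjugate produced above is an $H$-conjugate of one of finitely many fixed translates, so after shrinking $I$ to a nonzero ideal $I'$ with $E(2,R,I')\subseteq H$ and enlarging $N$ by the index, the whole bounded-conjugate argument takes place inside $H$. The resulting finite-index subgroup $E(2,R,I')\subseteq B_\varepsilon$ then produces the profinite completion exactly as in \cite{polterovich2021norm}. I expect the elementary-matrix extraction and the precise identification of the ideal $I$ in the many-units step to be the only genuinely new computations; everything else is formally identical to the higher-rank argument.
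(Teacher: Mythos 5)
Your overall reduction (non-discrete norm $\Rightarrow$ every ball contains a finite-index subgroup $\Rightarrow$ the completion is profinite) is sound, and your torus-conjugation step is essentially the paper's Lemma~\ref{tech_lemma_2}. The gap is the last step of your ``uniform bounded-conjugate statement'': having produced $E_{12}(J)$ and $E_{21}(J)$ for some nonzero $J$, you pass to \emph{all} of $E(2,R,I)$ by asserting, in parentheses, that bounded elementary generation of $E(2,R)$ makes $E(2,R,I)$ boundedly generated by $E(2,R)$-conjugates of $E_{12}(y)$, $y\in I$, with a bound independent of $I$. This is not a formal consequence of the hypotheses. Absolute bounded elementary generation writes an element of $E(2,R,I)$ as a short product of elementary matrices of level $R$; it gives no bounded decomposition into conjugates of level-$I$ elementary matrices, and there is no cheap conversion from one to the other. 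Such uniform \emph{relative} bounded generation for ${\rm SL}_2$ is a substantial theorem in its own right (it is the kind of statement proved, with real effort and extra hypotheses, in \cite{SL_2_strong_bound}), so invoking it silently is a genuine hole --- and since your $N$ must be fixed before choosing $g$ with $\|g\|<\epsilon/N$, the uniformity in $I$ cannot be dropped. By contrast, your other parenthetical claim, that $E(2,R,I)$ has finite index, \emph{is} cheap: modulo $E(2,R,I)$ the image of an elementary matrix depends only on its entry mod $I$, so bounded elementary generation bounds the quotient by the number of words of length at most $L$ in an alphabet of size $2\,\card{R/I}$. In other words, the one statement you need with a uniform bound is exactly the one you have not proved.

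The paper's proof is architected precisely to avoid this. It never exhibits finite-index subgroups of the whole group inside small balls. Instead it completes $E(2,R)$ to $G$, uses bounded elementary generation only to write $G$ as a set-theoretic product $U_{i_1}\cdots U_{i_{N(R)}}$ of the closures $U_1,U_2$ of the two root subgroups $E_{12}(R)$, $E_{21}(R)$, proves that each $U_i$ is profinite (Lemma~\ref{tech_lemma_1}; there the finite-index subgroups are needed only inside the abelian group $E_{12}(I)$, where they come for free because $J\subseteq I$ has finite index in $R$), and then invokes the observation from \cite{polterovich2021norm} that a topological group which is a finite set-theoretic product of subgroups profinite in the relative topology is itself profinite. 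That product-of-profinite-subgroups device is the idea your proposal is missing; once you have it, your problematic last step becomes unnecessary. A smaller point to watch: whenever the norm lives on a proper subgroup ($H$, or $E(2,R,I)$ in the relative step), every conjugation used to bound norms must be by elements of that subgroup; this is exactly why the paper states Lemma~\ref{tech_lemma_2} in terms of $E(2,R,cR)$-conjugates and remarks that $h(u)\in E(2,R,cR)$, a constraint your sketch (conjugation by $w$, $h(u)$, and transversal representatives) passes over.
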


To prove this, we need the following modified version of \cite[Lemma~2.3]{polterovich2021norm}:

\begin{lemma}\label{tech_lemma_1}
Let $R$ be as in Theorem~\ref{main_thm_tech_version}. Let $\|\cdot\|$ further be the restriction of a non-discrete conjugation-invariant norm on $E(2,R,I)$ to the elementary subgroup $E_{12}(I).$ Then the norm completion $\overline{E_{12}(I)}$ of $E_{12}(I)$ with respect to $\|\cdot\|$ is profinite.
\end{lemma}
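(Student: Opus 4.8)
The plan is to identify the elementary subgroup $E_{12}(I)$ with the additive group $(I,+)$ via $E_{12}(x)\mapsto x$, and to study the invariant length function $\nu(x):=\|E_{12}(x)\|$ on $(I,+)$. Since $\nu$ is subadditive, symmetric and positive off $0$, the completion $\overline{E_{12}(I)}$ is an abelian topological group, and it is profinite precisely when $(I,\nu)$ is totally bounded and admits arbitrarily small finite-index (equivalently, open) subgroups. In the higher-rank setting of \cite{polterovich2021norm} the relation $E_{13}(ab)=[E_{12}(a),E_{23}(b)]$ lets one realise multiplication by an arbitrary ring element $a$ as a commutator whose norm is controlled by $\nu$ alone; for ${\rm SL}_2$ this Steinberg relation is unavailable, and replacing it is the whole content of the lemma.

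The key new ingredient I would establish is that, for the right supply of units, the diagonal torus element $h(u)$ already lies in $E(2,R,I)$. Fix a nonzero $c\in I$ and, using that $R$ is a ring with many units, choose a unit $u$ with $u-1\in c^2R$ and $u^8\neq1$. Then $u^{-1}-1=-u^{-1}(u-1)\in c^2R$ as well, and a direct check gives the factorisation
\begin{equation*}
h(u)=E_{12}\!\left(\tfrac{u-1}{c}\right)E_{21}(c)\,E_{12}\!\left(\tfrac{u^{-1}-1}{c}\right)E_{21}(-uc),
\end{equation*}
in which every entry lies in $cR\subseteq I$. This is exactly where the exponent $2$ in Definition~\ref{many_units} is used: it forces $(u-1)/c$ and $(u^{-1}-1)/c$ into $I$, so that $h(u)\in E(2,R,I)$. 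Consequently conjugation by $h(u)$ is a $\nu$-isometry, and since $h(u)E_{12}(x)h(u)^{-1}=E_{12}(u^2x)$ we obtain $\nu(u^2x)=\nu(x)$ and hence $\nu((u^2-1)x)\leq\nu(u^2x)+\nu(x)=2\nu(x)$ for all $x\in I$. The condition $u^8\neq1$ ensures $u^2\neq1$, so that the elements $u^2-1$ arising this way are nonzero and generate a nonzero ideal, which by hypothesis has finite index in $R$.

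With multiplication by the squares $u^2$ available as isometries, I would next run the argument of \cite[Lemma~2.3]{polterovich2021norm} essentially verbatim. Non-discreteness of $\|\cdot\|$ produces $x_0\neq0$ with $\nu(x_0)$ as small as desired. Using the isometries $x\mapsto u^2x$ and the finite index of ideals, I would propagate this smallness from the single element $x_0$ to a finite-index subgroup $H\leq(I,+)$ on which $\nu$ is uniformly bounded by $\varepsilon$; total boundedness is then immediate, since the finitely many cosets of $H$ each have $\nu$-diameter at most $\varepsilon$. The closures of such subgroups $H$ are open, finite-index and of arbitrarily small diameter in $\overline{E_{12}(I)}$, so the completion is compact and totally disconnected, i.e.\ profinite.

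The main obstacle is this last propagation step. The factorisation above only yields the pointwise bound $\nu((u^2-1)x)\leq2\nu(x)$ and the exact invariance $\nu(u^2x)=\nu(x)$; upgrading these to a genuine bound $\nu|_H\leq\varepsilon$ on an entire finite-index subgroup is delicate, because naive additive combinations of small-norm elements have norms that grow. The point will be that the finite-index subgroup of units $u\equiv1\pmod{c^2}$ acts by isometries with only finitely many orbits modulo a fixed finite-index ideal, which---together with non-discreteness---pins $\nu$ down to a non-archimedean, congruence-type length function; this is precisely the mechanism that makes the completion profinite rather than, say, a torus.
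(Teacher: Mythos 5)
Your proposal has a genuine gap, and it sits exactly where you flagged it. Two distinct problems arise. First, your starting point --- ``non-discreteness of $\|\cdot\|$ produces $x_0\neq 0$ with $\nu(x_0)$ as small as desired'' --- is not justified: the hypothesis is that the norm is non-discrete on the ambient group $E(2,R,I)$, and the small-norm elements it provides are arbitrary matrices $A\in E(2,R,I)$, not elementary matrices $E_{12}(x_0)$. A priori the restriction $\nu$ of the norm to $E_{12}(I)$ could be bounded away from zero; ruling that out is essentially the content of the lemma, so it cannot be assumed. Second, even granting a single small element, your propagation step is not an argument: the isometries $x\mapsto u^2x$ preserve $\nu$, and the pointwise bound $\nu((u^2-1)x)\leq 2\nu(x)$ follows, but neither produces a \emph{uniform} bound on an entire finite-index subgroup of $(I,+)$. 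Having finitely many orbits of the unit action modulo a finite-index ideal does not control $\nu$ either, since elements in the same coset of that ideal differ by elements whose norms are uncontrolled; and sums of small elements have norms that grow linearly, so without a bounded-generation-type statement for $(I,+)$ the smallness of one element cannot spread to an ideal.

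The paper closes both gaps at once with the matrix identity of Lemma~\ref{tech_lemma_2}: given $A\in E(2,R,I)$ with $\|A\|\leq\epsilon/4$ (such $A$ exists by non-discreteness of the norm on the ambient group; after passing to a suitable conjugate or commutator one may assume its $(2,1)$-entry $c$ is nonzero), and a unit $u\equiv 1 \bmod c^2R$ with $u^8\neq 1$, \emph{every} element of $E_{12}((u^8-1)cR)$ is a product of at most four $E(2,R,cR)$-conjugates of $A^{\pm 1}$, hence has norm at most $4\|A\|\leq\epsilon$. This yields, for each $\epsilon>0$, a nonzero ideal $J_\epsilon=(u^8-1)cR\subseteq I$ (nonzero because $R$ is a domain) with $E_{12}(J_\epsilon)$ inside the $\epsilon$-ball; since nonzero ideals have finite index, the closures $U_{J_\epsilon}$ are open finite-index subgroups forming a neighborhood basis of the identity, which is exactly profiniteness. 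Your explicit factorization $h(u)=E_{12}\bigl((u-1)/c\bigr)\,E_{21}(c)\,E_{12}\bigl((u^{-1}-1)/c\bigr)\,E_{21}(-uc)$ is correct, and it is in fact the calculation that the paper's remark after Lemma~\ref{tech_lemma_2} leaves out (it shows $h(u)\in E(2,R,cR)$, which legitimizes the conjugators used there); but on its own the torus action it provides cannot replace Lemma~\ref{tech_lemma_2}, which is the step that converts one small-norm matrix of the ambient group into a whole ideal's worth of uniformly small-norm elementary matrices.
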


To prove this we need the following technical lemma, which in turn is a modified version of \cite[Lemma~A9]{polterovich2021norm}:

\begin{lemma}\label{tech_lemma_2}
Let $R$ be an integral domain, $c\in R$ non-zero and $u\in R$ a unit such that $u-1\in c^2R$. Assume further that 
\begin{equation*}
A=
\begin{pmatrix}
a & b\\
c & d
\end{pmatrix}
\end{equation*}
is an element of ${\rm SL}_2(R).$ Then each element of $E_{12}((u^8-1)cR)$ is a product of at most four $E(2,R,cR)$-conjugates of $A$ and $A^{-1}.$ 
\end{lemma}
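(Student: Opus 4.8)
The plan is to verify the statement on the generators of $E_{12}((u^8-1)cR)$, that is, to show that for each $r\in R$ the single elementary matrix $E_{12}((u^8-1)cr)$ is a product of at most four $E(2,R,cR)$-conjugates of $A$ and $A^{-1}$; since a general element of $E_{12}((u^8-1)cR)$ has exactly this form, this suffices. Throughout I would write $u=1+c^2s$ with $s\in R$ and record two facts making conjugators admissible: because $E_{21}(cR)$ is obtained from $E_{12}(cR)$ by conjugating with the Weyl element (which lies in $E(2,R)$, and $E(2,R,cR)$ is normal in it), both $E_{12}(cR)$ and $E_{21}(cR)$ lie in $E(2,R,cR)$, and any product of such matrices is again in $E(2,R,cR)$.

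The basic building block I would use is the two-conjugate product
\[
Y(t):=E_{12}(t)\,A\,E_{12}(-t)\,A^{-1}=\begin{pmatrix} 1+tac+c^2t^2 & t(1-a^2)-act^2\\ tc^2 & 1-tac\end{pmatrix},
\]
which is a direct computation using $ad-bc=1$. For $t\in cR$ its conjugators $E_{12}(t)$ and $1$ lie in $E(2,R,cR)$, and crucially its lower-left entry is the clean multiple $tc^2$ of $c^2$. The mirror block $A\,E_{12}(t)\,A^{-1}\,E_{12}(-t)=Y(t)^{-1}$ has lower-left $-tc^2$, so paired blocks can be arranged to cancel their lower-left entries. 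The factor $u^8-1$ enters through rescaling: conjugation by $h(w)=\mathrm{diag}(w,w^{-1})$ multiplies the upper-right entry by $w^2$ and the lower-left by $w^{-2}$, so inserting $h(u^4)$ (for which $w^2=u^8$) between the two blocks produces a net off-diagonal scaling by $u^8$, hence a surviving difference of the form $(u^8-1)\cdot(\text{entry})$. I would then choose the two values of $t$ together with the single diagonal rescaling so that, after multiplying the four conjugates, the lower-left entry and the surplus diagonal entries cancel and the remaining upper-right entry is exactly $(u^8-1)cr$.

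The main obstacle is that $h(u^4)$ is not itself an element of $E(2,R,cR)$, so the rescaling cannot be inserted naively; the entire content of the lemma is to carry it out using only $E(2,R,cR)$-conjugators. This is exactly where the hypothesis $u-1\in c^2R$ is indispensable. First, it gives $u^4-1\in c^2R$, so $h(u^4)\equiv I\pmod{cR}$ and conjugation by $h(u^4)$ differs from trivial conjugation only by terms in $cR$; second, and more delicately, the $w^{-2}=u^{-8}$ scaling of the lower-left entry $tc^2$ introduces an inverse power of the unit but no new denominator, and the discrepancy it creates is divisible by $c^2(u^4-1)$, which by hypothesis is again a genuine multiple of $c$ in $R$. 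Tracking these divisibilities shows that the rescaled block $h(u^4)\,Y(t)\,h(u^4)^{-1}$ agrees, modulo the cancellations above, with a product of $E(2,R,cR)$-conjugates of $A^{\pm1}$, the effective conjugators being explicit products of $E_{12}(cR)$ and $E_{21}(cR)$ matrices read off from $u=1+c^2s$.

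Once the correct conjugators are in hand, the remainder is a routine $2\times2$ verification: expand the four-fold product, use $ad-bc=1$ repeatedly, and check that the result is $E_{12}((u^8-1)cr)$ of determinant $1$. I expect the discovery of the right conjugators---so that the algebra closes to an upper-triangular unipotent matrix while simultaneously every conjugator lands in $E(2,R,cR)$---to be the substantive step, the verification itself being bookkeeping.
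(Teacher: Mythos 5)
Your proposal is built around a claim that is false, and the step you defer is precisely the step that constitutes the proof. You assert that ``$h(u^4)$ is not itself an element of $E(2,R,cR)$'' and present working around this as the main content of the lemma. In fact $h(w)\in E(2,R,cR)$ for every unit $w$ with $w\equiv 1\bmod cR$: one checks directly that
\begin{equation*}
h(w)=E_{21}(w^{-1}-1)\cdot\bigl(E_{12}(1)E_{21}(w-1)E_{12}(-1)\bigr)\cdot E_{12}(1-w^{-1}),
\end{equation*}
and each of the three factors lies in $E(2,R,cR)$: the outer two belong to $E_{21}(cR)$ and $E_{12}(cR)$ since $w-1,\,w^{-1}-1\in cR$, and the middle one is an $E(2,R)$-conjugate of $E_{21}(w-1)$, admissible by your own Weyl-element observation together with normality. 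Applied to $w=u^2$ or $w=u^4$, this is exactly the remark the paper makes after its proof. So the ``main obstacle'' you identify does not exist; worse, your substitute for it --- ``tracking these divisibilities'' to find ``effective conjugators \dots read off from $u=1+c^2s$'' --- is an assertion, not an argument, so the admissibility of your conjugators is nowhere actually established.

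The matrix construction itself is also left open, and its natural instantiation fails. With your blocks $Y(t)=E_{12}(t)AE_{12}(-t)A^{-1}$, the symmetric choice $t_1=t_2=t$ gives a lower-left entry of $Y(t)\cdot h(u^4)Y(t)^{-1}h(u^4)^{-1}$ equal to $tc^2(1-tac)(1-u^{-8})$, which does not vanish in general; with two parameters $t_1,t_2\in cR$ the vanishing condition becomes $t_1-u^{-8}t_2=t_1t_2ac(1-u^{-8})$, and you would additionally need both diagonal entries to equal $1$ and the corner to sweep out all of $(u^8-1)cR$ --- none of this is addressed, and you explicitly postpone ``the discovery of the right conjugators'' as ``the substantive step.'' The paper closes the algebra by a different device: the diagonal twist is placed \emph{inside} the first two-conjugate block rather than between blocks. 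Writing $u^4-1=cx$, where $x\in cR$ precisely because $u-1\in c^2R$ (this is where the hypothesis enters), and setting $t:=ax\in cR$, the product $Y:=E_{12}(t)A^{-1}E_{12}(-t)\cdot h(u^2)Ah(u^{-2})$ of two admissible conjugates is exactly upper triangular, $Y=\begin{pmatrix} u^{-4} & q\\ 0 & u^{4}\end{pmatrix}$ with $q\in cR$, because the choice $ct=a(u^4-1)$ makes the lower-left contributions cancel identically. Then for $z\in cR$ and $p:=-q+z$, the matrix $M=\begin{pmatrix} u^{4} & p\\ 0 & u^{-4}\end{pmatrix}=h(u^4)E_{12}(u^{-4}p)$ lies in $E(2,R,cR)$, and the commutator $YMY^{-1}M^{-1}=E_{12}\bigl((u^{-4}-u^4)(p+q)\bigr)=E_{12}\bigl(-u^{-4}(u^8-1)z\bigr)$ realizes every element of $E_{12}((u^8-1)cR)$ as a product of four $E(2,R,cR)$-conjugates of $A$ and $A^{-1}$. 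Your plan gestures at the same ingredients but never produces this (or any) closing identity.
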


\begin{proof}
As $u-1$ is an element of the ideal $cR$ by assumption, so is $u^4-1.$ Hence choose $x\in R$ with $u^4-1=cx$ and set $t:=ax.$ But note that as $u-1\in c^2R$, there is an $y\in R$ such that $u-1=c^2y$ holds. Hence
\begin{equation*}
cx=u^4-1=(u-1)\cdot(u^3+u^2+u+1)=c^2y\cdot(u^3+u^2+u+1)
\end{equation*}
and so $x=cy\cdot(u^3+u^2+u+1)\in cR.$ Thus $t$ is an element of $cR.$ Then the matrix $Y:=E_{12}(t)A^{-1}E_{12}(-t)h(u^2)Ah(u^{-2})$ is a product of two $E(2,R,cR)$-conjugates of $A$ and $A^{-1}$ and has the form  
\begin{equation*}
Y=
\begin{pmatrix}
u^{-4} & q\\
0 & u^4
\end{pmatrix}
\end{equation*}
for some $q\in cR.$ But then note for $p\in R$ that 
\begin{equation*}
E_{12}((u^{-4}-u^4)(p+q))=Y\cdot
\begin{pmatrix}
u^4 & p\\
0 & u^{-4}
\end{pmatrix}
\cdot Y^{-1}
\cdot
\begin{pmatrix}
u^{-4} & -p\\
0 & u^4
\end{pmatrix}
\end{equation*}
Hence choosing $p:=-q+z$ for $z\in cR$, we obtain that $E_{12}((u^4-u^{-4})z)$ is a product of four $E(2,R,cR)$-conjugates of $A$ and $A^{-1}.$
\end{proof}

\begin{remark}
Implicit in this proof is the claim that $h(u)$ is an element of $E(2,R,cR)$. We will not prove this, but it follows from a short calculation with the standard decomposition of $h(u)$ into elementary matrices.
\end{remark}

Using this lemma, we can prove Lemma~\ref{tech_lemma_1}:  

\begin{proof}
For each ideal $J$ of $R$ contained in $I$ consider the closure $U_J$ of $E_{12}(J)$ in $\overline{E_{12}(I)}$. As a non-zero ideal $J$ in $R$ has finite index, this implies that the closed subgroup $U_J$ has finite index in $\overline{E_{12}(I)}.$ Thus $U_J$ is also open in $\overline{E_{12}(I)}.$ Hence to show that $\overline{E_{12}(I)}$ is profinite, it suffices to show that the identity in $\overline{E_{12}(I)}$ has a neighborhood basis of subgroups of the form $U_J.$ First, note that $\|\cdot\|$ is the restriction of a non-discrete norm on $E(2,R,I),$ which we will also denote by $\|\cdot\|.$ Thus for each $\epsilon>0,$ there is an $A\in E(2,R,I)$ with $\|A\|\leq\epsilon/4.$ As there are only finitely many scalar matrices in $E(2,R)$, we may by possibly choosing a smaller $\epsilon$ or by considering a suitable conjugate or commutator of $A$ assume that $A$ has the $(2,1)$-entry $c$ with $c\neq 0.$ But $R$ is a ring with many units, so we may choose a unit $u$ in $R$ with $u^8-1\neq 0$ and $u\equiv 1\text{ mod }c^2R.$ According to Lemma~\ref{tech_lemma_2}, the subgroup $E_{12}((u^8-1)cR)$ is then contained in the $4\cdot\|A\|$ ball around $E_2$ as $c$ is an element of $I.$ Hence $J_{\epsilon}:=(u^8-1)cR$ is contained in $I$ and $E_{12}(J_{\epsilon})$ is contained in the $\epsilon$-ball around $E_2.$ But this implies that $U_{J_{\epsilon}}$ is a subgroup of $\overline{E_{12}(I)}$ contained in the $\epsilon$-ball around $E_2.$   
\end{proof}                                                              

One can now prove Theorem~\ref{main_thm_tech_version} in essentially the same way as \cite[Theorem~1.8]{polterovich2021norm}:

\begin{proof}
We first prove the dichotomy property in the case of $H=E(2,R)$. So let $\|\cdot\|$ be a non-discrete, conjugation-invariant norm on $E(2,R)$ and let $G$ be the norm-completion of $E(2,R)$ with respect to $\|\cdot\|.$ Further, let $U_1$ and $U_2$ be the closures of $E_{12}(R)$ and $E_{21}(R)$ within $G$ respectively. Then by assumption the group $E(2,R)$ is boundedly generated by its elementary subgroups. Thus $G$ is boundedly generated by its subgroups $U_1,U_2$ and so there is a natural number $N(R)$ such that $G=U_{i_1}\cdot U_{i_2}\cdots U_{i_{N(R)}}$ holds for $i_1,\dots,i_{N(R)}\in\{1,2\}$. But according to Lemma~\ref{tech_lemma_1} the subgroups $U_1,U_2$ of $G$ are profinite. But it was observed in the proof of \cite[Theorem~1.8(i)]{polterovich2021norm}, that if a topological group $G$ is a set-theoretic product of finitely many subgroups profinite in the relative topology, then $G$ itself is profinite. This finishes the proof for $E_2(R)$. The general case works the same way as in \cite{polterovich2021norm} as well: Let $H$ be a finite index subgroup of $E(2,R)$. Then following precisely the line of arguments from \cite{polterovich2021norm} and the already shown case $H=E(2,R)$, one reduces the dichotomy proof for $H$ to the claim that for a non-zero ideal $I$ and a conjugation-invariant, non-discrete norm $\|\cdot\|$ on $E(2,R,I)$, said norm restricts to norms on $E_{12}(I)$ (and $E_{21}(I)$) having profinite norm-completions. But this is precisely the claim of Lemma~\ref{tech_lemma_1}. 
\end{proof}

If $R$ is an integral domain containing a unit $v$ of infinite order and each of its non-zero ideals has finite index, then it has many units: Namely, let $c\in R-\{0\}$ be given. Assuming wlog that $c$ is not a unit in $R$, we note that $R/c^2R$ is finite. Then consider $\overline{v}:=v+c^2R$. This element must have finite order in $(R/c^2R)^*$ and so there is a $k>0$ such that $\overline{v}^k=1+c^2R.$ Note that $u:=v^k$ has infinite order and so $u^8$ can not be $1.$ Also $u-1\in c^2R$ by choice of $u.$ This argument applies for example for rings of S-algebraic integers with infinitely many units as they have units of infinite order according to Dirichlet's Theorem \cite[Corollary~11.7]{MR1697859}. Additionally ${\rm SL}_2(R)$ has bounded elementary generation \cite[Theorem~1.1]{MR3892969}. Thus Theorem~\ref{main_thm_tech_version} implies Theorem~\ref{main_theorem}. Additionally, we note the following version of \cite[Theorem~1.12]{polterovich2021norm}:

\begin{theorem}
Let $R$ be a compact metrizable ring with many units such that each of its non-zero ideals has finite index and such that $R$ is also an integral domain. Then ${\rm SL}_2(R)$ equipped with the relative topology from $R^{2\times 2}$ as well as any of its finite index subgroups are norm-complete.
\end{theorem}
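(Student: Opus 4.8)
The plan is to show that for every non-discrete conjugation-invariant norm $\|\cdot\|$ on $G:={\rm SL}_2(R)$ (which equals $E(2,R)$ under our hypotheses) the induced topology $\tau_{\|\cdot\|}$ agrees with the relative topology $\tau$ inherited from $R^{2\times 2}$. Since $R$ is compact metrizable and ${\rm SL}_2(R)$ is the closed locus $\{\det=1\}$ inside the compact metrizable space $R^{2\times 2}$, the group $(G,\tau)$ is compact Hausdorff, and each $E_{12}(R),E_{21}(R)$ is a compact subgroup, namely a homeomorphic copy of $(R,+)$. On the other hand $\tau_{\|\cdot\|}$ is the topology of the bi-invariant metric $d(g,h)=\|gh^{-1}\|$, so $(G,\tau_{\|\cdot\|})$ is a Hausdorff topological group. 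The decisive reduction is that it suffices to prove $\tau_{\|\cdot\|}\subseteq\tau$, i.e.\ that $\|\cdot\|$ is $\tau$-continuous: a continuous bijection from a compact space to a Hausdorff space is a homeomorphism, so the identity map $(G,\tau)\to(G,\tau_{\|\cdot\|})$ would then be a homeomorphism and $\tau=\tau_{\|\cdot\|}$.

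First I would establish $\tau$-continuity of $\|\cdot\|$ on the elementary subgroups. Fix $\epsilon>0$. Exactly as in the proof of Lemma~\ref{tech_lemma_1} (with $I=R$), non-discreteness yields $A\in E(2,R)$ with $\|A\|\leq\epsilon/4$ and non-zero $(2,1)$-entry $c$, and a unit $u$ with $u^8\neq 1$ and $u\equiv 1\bmod c^2R$; Lemma~\ref{tech_lemma_2} then places $E_{12}(J_\epsilon)$ inside the $\epsilon$-ball, where $J_\epsilon=(u^8-1)cR$. The point I would stress is that $J_\epsilon$ is $\tau$-open: it is the image of $R$ under multiplication by the non-zero element $(u^8-1)c$, hence compact and therefore closed, and a closed subgroup of finite index (every non-zero ideal has finite index) is automatically open. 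Thus $J_\epsilon$ is a $\tau$-neighbourhood of $0$ on which $\|E_{12}(\cdot)\|<\epsilon$; since $E_{12}(R)\cong(R,+)$ and the inclusion into $(G,\tau_{\|\cdot\|})$ is a homomorphism, continuity at $0$ upgrades to $\tau$-continuity of $\|\cdot\|$ on all of $E_{12}(R)$, and the transpose argument handles $E_{21}(R)$.

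The main step, and where I expect the real work to lie, is to propagate $\tau$-continuity from the abelian elementary subgroups to all of $G$. Here I would invoke bounded elementary generation of $E(2,R)$: there is an $N$ with $G=E_{12}(R)E_{21}(R)E_{12}(R)\cdots$ ($N$ alternating factors), so the multiplication map $\mu\colon\Pi:=E_{12}(R)\times E_{21}(R)\times\cdots\to G$ (with each factor carrying its $\tau$-subspace topology) is surjective. As $\Pi$ is compact and $\mu$ is $\tau$-continuous into the Hausdorff space $(G,\tau)$, the map $\mu$ is closed, hence a quotient map. By the previous paragraph each factor inclusion $E_{1j}(R)\hookrightarrow(G,\tau_{\|\cdot\|})$ is continuous, and multiplication in the topological group $(G,\tau_{\|\cdot\|})$ is continuous, so the same set-map $\mu\colon\Pi\to(G,\tau_{\|\cdot\|})$ is continuous too. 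Since $\mu$ is a $\tau$-quotient map and $\mathrm{id}\circ\mu=\mu$ is $\tau_{\|\cdot\|}$-continuous, the universal property of quotient maps forces $\mathrm{id}\colon(G,\tau)\to(G,\tau_{\|\cdot\|})$ to be continuous, which is precisely the reduction needed. I regard this quotient-map bootstrap --- turning $E_{12}$-wise additive control into control on the whole group \emph{without} having to factor a given small matrix into small elementary matrices --- as the crux, since the naive approach via factorizations fails (limits of factorizations of elements tending to $1$ need not have trivial factors).

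Finally, for a finite index subgroup $H\leq{\rm SL}_2(R)$ I would follow \cite{polterovich2021norm} together with the argument above. Such $H$ is open (finite-index subgroups of these profinite groups are open), hence itself compact in the relative topology, and it contains $E(2,R,I)$ for a suitable non-zero ideal $I$. Applying Lemma~\ref{tech_lemma_1} to $E_{12}(I)$ and $E_{21}(I)$ gives $\tau$-continuity of any non-discrete norm on these subgroups exactly as above, and a relative version of bounded generation of $H$ lets the same quotient-map argument run; the compact-to-Hausdorff principle then again identifies the norm topology on $H$ with the relative topology, completing the proof.
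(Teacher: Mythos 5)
Your overall strategy is exactly the one the paper has in mind: the paper gives no separate argument for this theorem, stating only that the proof is ``virtually identical'' to that of Theorem~\ref{main_thm_tech_version}, and your proposal is a correct implementation of that adaptation. Your two topological ingredients are the right ones. The reduction via ``a continuous bijection from a compact space to a Hausdorff space is a homeomorphism'' is what replaces the profiniteness conclusion of Theorem~\ref{main_thm_tech_version}, and the quotient-map bootstrap (the multiplication map from the compact product of elementary subgroups into $(G,\tau)$ is closed, hence a quotient map, so $\tau$-to-norm continuity of the factor inclusions, which you extract correctly from Lemmas~\ref{tech_lemma_1} and~\ref{tech_lemma_2} together with the observation that $J_\epsilon=(u^8-1)cR$ is closed of finite index, hence open) is precisely the mechanism that lets one avoid factoring a matrix near the identity into elementary matrices near the identity. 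This is exactly why the paper can claim that its argument bypasses the technical \cite[Lemma~3.7]{polterovich2021norm}, so you have correctly identified the crux.

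There is, however, one genuine gap: you invoke ${\rm SL}_2(R)=E(2,R)$ and bounded elementary generation as if they were given, but unlike Theorem~\ref{main_thm_tech_version} the present theorem does \emph{not} hypothesize bounded generation, so a standalone proof must derive it from the compactness assumptions; without it, the surjectivity of your multiplication map $\mu$ is unsupported. This is where the hypotheses ``compact, integral domain'' do real work: a compact Hausdorff ring with identity is profinite, a commutative profinite ring decomposes as a product of profinite local rings, and since an integral domain has no nontrivial idempotents, $R$ must be local. Over a local ring, Gaussian elimination writes every element of ${\rm SL}_2(R)$ as a product of at most four elementary matrices: if the entry $c$ is a unit, then $A=E_{12}((a-1)c^{-1})E_{21}(c)E_{12}((d-1)c^{-1})$, and otherwise $a$ is a unit and one first multiplies by $E_{21}(a^{-1}(1-c))$ to produce a unit in the corner. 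Some such argument must be supplied. Your treatment of the finite-index case is also looser than the rest (openness of finite-index subgroups and the containment $E(2,R,I)\leq H$ both need justification), but there you are only matching the level of detail of the paper itself, which defers that part to \cite{polterovich2021norm}.
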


The proof is virtually identical to the proof of Theorem~\ref{main_thm_tech_version} above, so we will skip it. We do however want to note, that this yields a proof of \cite[Theorem~3.6]{polterovich2021norm}, which does not require the rather technical \cite[Lemma~3.7]{polterovich2021norm}.

\section{Closing remarks}

To round out this short note, we note that one can prove a generalization of \cite[Theorem~1.8]{polterovich2021norm} also for all other split Chevalley groups besides the ${\rm SL}_n$. The main difference is not the statement, which would be the same, but that a bit of additional care has to be taken in the proof: Namely, the root subgroups associated to short and long roots require slightly different approaches to prove the appropriate form of Lemma~\ref{tech_lemma_1}. Also the exceptional groups ${\rm Sp}_4$ and $G_2$ will require different strategies in case the ring $R$ in question has the bad characteristics $2$ or $3$. Ultimately though, all the split cases are very similar; the more interesting cases are likely those arising from more complicated algebraic groups.  



\bibliography{bibliography}
\bibliographystyle{plain}

\end{document}